\newtheorem{thm}{Theorem}[section]
\newtheorem{prop}[thm]{Proposition}
\newtheorem{cor}[thm]{Corollary}
\newtheorem{lem}[thm]{Lemma}
\newtheorem{prob}[thm]{Problem}
\numberwithin{equation}{section}
\newcommand{\sn}{\mathfrak{S}_n}
\newcommand{\mfs}[1]{\mathfrak{S}_{#1}}
\newcommand{\bn}{\mathfrak{B}_n}
\newcommand{\gnd}{\mathcal G}
\newcommand{\mfg}[1]{\gnd(#1)}
\newcommand{\czn}{\mathbb{C}[z_{1,1},\dotsc,z_{n,n}]}
\newcommand{\csn}{\mathbb{C}[\sn]}
\newcommand{\trsp}{\mathcal{T}}
\newcommand{\imm}[1]{\mathrm{Imm}_{#1}}
\newcommand{\gimm}[1]{\mathrm{Imm}_{#1}^{\gnd}}
\newcommand{\bimm}[1]{\mathrm{Imm}_{#1}^{\bn}}
\newcommand{\simm}[1]{\mathrm{Imm}_{#1}^{\sn}}
\newcommand{\grimm}[2]{\mathrm{Imm}_{#1}^{\mfg{\smash{#2}}}}
\newcommand{\glambdamin}{\mfg{\bos \lambda}^-}
\newcommand{\inv}{\mathrm{inv}}
\newcommand{\defeq}{:=} 
\newcommand{\bos}[1]{{\boldsymbol #1}}
\newcommand{\spn}{\mathrm{span}}
\newcommand{\sgn}{\mathrm{sgn}}
\newcommand{\triv}{\mathrm{triv}}
\newcommand{\ntnsp}{\negthinspace}
\newcommand{\ntksp}{\negthickspace}
\newcommand{\nTksp}{\negthickspace\negthickspace}
\newcommand{\bp}{\begin{prob}}
\newcommand{\ep}{\end{prob}}
\newcommand{\perm}{\mathrm{per}}
\newcommand{\permmon}[2]{#1_{1,#2_1} \ntnsp\cdots {#1}_{n,#2_n}}
\newcommand{\upparrow}{\big \uparrow \nTksp \phantom{\uparrow}}
\newcommand*{\dt}[1]{%
  \accentset{\mbox{\large .}}{#1}}
\newcommand*{\ddt}[1]{%
  \accentset{\mbox{\large .\hspace{-0.25ex}.}}{#1}}
\begin{document}
\author{Mark Skandera}
\title{Generating functions for monomial characters of wreath products
  $\mathbb Z/d\mathbb Z \wr \sn$}


\date{\today}

\begin{abstract}
  Let
  $\mathbb Z/d\mathbb Z \wr \sn$ denote the wreath product
  of the cyclic group $\mathbb Z/d\mathbb Z$ with the symmetric group $\sn$.
  We define generating functions for monomial
  (induced one-dimensional) characters of
  $\mathbb Z/d\mathbb Z \wr \sn$
  and express these in terms of determinants and permanents.
  This extends work of Littlewood
  ({\em The Theory of Group Characters and Representations of Groups}, 1940)
  and Merris and Watkins
  ({\em Linear Algebra Appl.}, {\bf 64}, 1985)
  on generating functions for the monomial characters of $\sn$.
  \end{abstract}

\maketitle

\section{Introduction}\label{s:intro}

Let $z = (z_{i,j})$ be an $n \times n$ matrix of variables
and let $\sn$ be the symmetric group.
For each linear functional $\theta: \csn \rightarrow \mathbb C$,
define the generating function
\begin{equation}\label{eq:imm}
  \imm{\theta}(z) \defeq \sum_{w \in \sn} \theta(w) \permmon zw \in \czn
\end{equation}
for $\theta$, and call this the {\em $\theta$-immanant}.
Such functions appeared originally in~\cite[p.\,81]{LittlewoodTGC}
for $\theta$ equal to irreducible $\sn$-characters $\chi^\lambda$,
and were extended in \cite[\S 3]{StanPos} to general $\theta$.
As is the case with many functions, a simple formula for a generating function
for $\theta$ can be as useful as a simple formula for the numbers
$\{ \theta(w) \,|\, w \in \sn \}$ themselves.

Particularly simple generating functions for
the {\em monomial} (induced one-dimensional) characters of $\sn$
are expressed in terms of integer partitions,
ordered set partitions, and
submatrices of $z$.
Call a nonnegative integer sequence 
$\lambda = (\lambda_1,\dotsc,\lambda_r)$
satisfying $\lambda_1 + \cdots + \lambda_r = n$
a {\em weak composition of $n$}
and write $|\lambda| = n$, $\ell(\lambda) = r$.
If the components of $\lambda$ are weakly decreasing and positive,
call it
an {\em (integer) partition of $n$}
and write $\lambda \vdash n$.
For any weak composition $\lambda$ of $n$,
call a sequence $(I_1, \dotsc, I_r)$ of pairwise disjoint subsets
of $[n] \defeq \{1, \dotsc, n \}$ an
{\em ordered set partition of $[n]$ of type $\lambda$}
if $|I_j| = \lambda_j$ for $j = 1,\dotsc,r$.
(We remark that our nonstandard terminology allows empty sets in
set partitions, whereas standard terminology~\cite[pp.\,39, 73]{StanEC1}
does not.)
Given subsets $I$, $J$ of $[n]$, define the {\em $(I,J)$-submatrix} of $z$
to be $z_{I,J} = (z_{i,j})_{i \in I, j \in J}$.

The class function space of $\sn$ has two standard bases consisting of
monomial characters: 
the {\em induced trivial character} basis
$\smash{\{ \eta^\lambda = \triv {\upparrow_{\mfs{\lambda}}^{\sn}} \,| \,
  \lambda\vdash n \}}$
and the {\em induced sign character} basis
$\smash{\{\epsilon^\lambda = \sgn {\upparrow_{\mfs{\lambda}}^{\sn}} \,|\,
  \lambda \vdash n \}}$,
where $\mfs \lambda$ is the Young subgroup of $\sn$ indexed by $\lambda$.
(See, e.g., \cite{Sag}.)
Littlewood~\cite[\S 6.5]{LittlewoodTGC} and Merris and Watkins~\cite{MerWatIneq}
came close to expressing the
$\eta^\lambda$- and $\epsilon^\lambda$-immanants as
\begin{align}
  \imm{\epsilon^\lambda}(z) \label{eq:lmw1}
  &= 
  \nTksp \sum_{(J_1,\dotsc,J_\ell)} \nTksp
  \det(z_{J_1,J_1}) \cdots \det(z_{J_\ell,J_\ell}),\\
  \imm{\eta^\lambda}(z) \label{eq:lmw2}
  &= 
  \nTksp \sum_{(J_1,\dotsc,J_\ell)} \nTksp
  \perm(z_{J_1,J_1}) \cdots \perm(z_{J_\ell,J_\ell}),
  \end{align}
where the sums are over all ordered set partitions $(J_1,\dotsc,J_\ell)$ of $[n]$
of type $\lambda = (\lambda_1,\dotsc,\lambda_\ell)$.
For example, we have
\begin{equation*}
  \begin{aligned}
  \imm{\epsilon^{21}}(z) &=
  \det\ntnsp\begin{bmatrix} z_{1,1} & z_{1,2} \\ z_{2,1} & z_{2,2} \end{bmatrix} \ntnsp z_{3,3} \,+\,
  \det\ntnsp\begin{bmatrix} z_{1,1} & z_{1,3} \\ z_{3,1} & z_{3,3} \end{bmatrix} \ntnsp z_{2,2} \,+\,
  \det\ntnsp\begin{bmatrix} z_{2,2} & z_{2,3} \\ z_{3,2} & z_{3,3} \end{bmatrix} \ntnsp z_{1,1}\\
  &= 3 z_{1,1}z_{2,2}z_{3,3} - z_{1,2}z_{2,1}z_{3,3} - z_{1,3}z_{2,2}z_{3,1} - z_{1,1}z_{2,3}z_{3,2},
  \end{aligned}
\end{equation*}
and $\epsilon^{21}(123) = 3$, $\epsilon^{21}(213) = \epsilon^{21}(321) = \epsilon^{21}(132) = -1$, $\epsilon^{21}(312) =  \epsilon^{21}(231) = 0$.
While
Littlewood, Merris, and Watkins may not have written
Equations (\ref{eq:lmw1}) -- (\ref{eq:lmw2})
explicitly, we call them the {\em Littlewood--Merris--Watkins identities}.
These identities have played an important role in
the evaluation of (type-$A$) Hecke algebra characters at Kazhdan--Lusztig
basis elements~\cite{CHSSkanEKL},
\cite{CSkanTNNChar}, \cite{KLSBasesQMBIndSgn},
the formulation of a
generating function for irreducible Hecke algebra characters~\cite{KSkanQGJ},
and the interpretation of coefficients of chromatic symmetric
functions~\cite{CHSSkanEKL}, \cite{SkanCharChrom}.
The identity in our main result (Theorem~\ref{t:main}) plays an important
role in the evaluation of hyperoctahedral group characters at elements
of the type-$BC$ Kazhdan-Lusztig basis~\cite{SkanGCEHGC}.

Let $\gnd = \gnd(n,d)$
be the wreath product $\mathbb Z/d\mathbb Z \wr \sn$.
Its class function space has $2^d$ standard bases consisting of monomial
characters, and it is possible to
use a matrix of $dn^2$ variables to construct generating
functions 
analogous to (\ref{eq:lmw1}) -- (\ref{eq:lmw2})
for the elements of these bases.
In Section~\ref{s:basics} we review $\gnd$ and its monomial characters;
in Section~\ref{s:main} we present our generating functions for these.

\section{$\gnd$ and its monomial characters}\label{s:basics}

The group $\gnd$ is generated by $n$ elements $s_1,\dotsc,s_{n-1}, t$
subject to the relations
\begin{equation}\label{eq:gnrelations}
  \begin{alignedat}2
    s_i^2 &= e &\quad &\text{for $i = 1, \dotsc, n-1$,}\\
    t^d &= e, &\quad &\\
    ts_1ts_1 &= s_1ts_1t, &\quad & \\
    s_is_j &= s_js_i &\quad &\text{for $|i-j| \geq 2$,}\\
    ts_j &= s_jt &\quad &\text{for $j \geq 2$,}\\
    s_is_js_i &= s_js_is_j &\quad &\text{for $|i-j| = 1$.}
  \end{alignedat}
\end{equation}
A one-line notation for elements of $\gnd$,
analogous to that for elements of $\sn$,
uses sequences of integer multiples of complex $d$th roots of unity.
Let $\zeta$ be a primitive $d$th root of unity, and
let $S$ be the set of sequences
\begin{equation}\label{eq:onelinenotation}
  \{ (\zeta^{\gamma_1} w_1, \dotsc, \zeta^{\gamma_n}w_n) \,|\,
  w_1 \cdots w_n \in \sn,
  (\gamma_1, \dotsc, \gamma_n) \in \mathbb Z/d\mathbb Z^n \}.
\end{equation}
We define an action of $\gnd$ on $S$ by letting the generators
act on a sequence $(a_1,\dotsc,a_n)$ as follows.
\begin{enumerate}
\item $s_i \circ ( a_1, \dotsc, a_n) =
  (a_1, \dotsc, a_{i-1}, a_{i+1}, a_i, a_{i+2}, \dotsc, a_n )$,
\item $t \circ (a_1, \dotsc, a_n) = (\zeta a_1, a_2, \dotsc, a_n )$.
\end{enumerate}
A bijection between $\gnd$ and $S$ is given by letting each element
$g \in \gnd$ act on the sequence $(1,\dotsc,n)$.
If $g \circ (1, \dotsc, n) = (\zeta^{\gamma_1}w_1, \dotsc, \zeta^{\gamma_n}w_n)$,
we define this second sequence to be the
{\em one-line notation} of $g$, and we write
$g = (\gamma,w)$, where
$\gamma = (\gamma_1, \dotsc, \gamma_n) \in \mathbb Z/d\mathbb Z^n$,
$w \in \sn$.
In particular, the identity element $e$ has one-line notation $1 \cdots n$.

Since $\gnd$ is a finite group, Brauer's Induced Character Theorem implies
that the set of monomial characters of $\gnd$ spans
{\em trace space} $\trsp(\gnd)$ of $\gnd$, the set of
all linear functionals
$\theta: \mathbb C[\gnd] \rightarrow \mathbb C$ satisfying
$\theta(gh) = \theta(hg)$
for all $g, h \in \gnd$.
(See, e.g., \cite{SnaithEBI}.)
This includes
all $\gnd$-characters.
$\trsp (\gnd)$ has dimension equal to the number of
conjugacy classes of $\gnd$,
equivalently, to the number of sequences
$\bos \lambda = (\lambda^{0},\dotsc,\lambda^{d-1})$
of $d$ (possibly empty)
integer partitions,
with
\begin{equation*}
  |\lambda^0| + \cdots + |\lambda^{d-1}| = n.
\end{equation*}
We call such a sequence a {\em $d$-partition} of $[n]$
and write $\bos \lambda \vdash n$.

In order to describe natural bases of $\trsp(\gnd)$,
we introduce certain
subgroups of $\gnd$ which are analogous to Young subgroups
of $\sn$.
%
Fix $d$-partition
$\bos \lambda = (\lambda^{0},\dotsc,\lambda^{d-1}) \vdash n$,
and define
$r_k = \ell(\lambda^k)$ for
$k = 0, \dotsc, d-1$.
We will say that an ordered set partition of $[n]$
of type
\begin{equation}\label{eq:osetcomp}
      (\lambda^0_1, \dotsc, \lambda^0_{r_0},
     \lambda^1_1, \dotsc, \lambda^1_{r_1},
     \dotsc,
     \lambda^{d-1}_1, \dotsc, \smash{\lambda^{d-1}_{r_{d-1}}}),
  \end{equation}
has {\em type $\bos\lambda$}.
In particular,
let $\mathbf K(\bos \lambda)
= (K_1^0,\dotsc, K_{r_0}^0,
  K_1^1,\dotsc,K_{r_1}^1,
  \dotsc, K_1^{d-1}, \dotsc,K_{r_{d-1}}^{d-1})$
be the ordered set partition of $[n]$ of type $\bos\lambda$
  whose blocks are the $r_0 + \cdots + r_{d-1}$ subintervals
  \begin{equation}\label{eq:Kdef}
    K^0_1 = [1,\lambda^0_1], \quad
    K^0_2 = [\lambda^0_1 +1, \lambda^0_1 + \lambda^0_2], \quad \dotsc, \quad
    K^{d-1}_{r_{d-1}} = [n-\lambda^{d-1}_{r_{d-1}}+1, n]
  \end{equation}
  of $[n]$.
  For $1 \leq i \leq j \leq n$, define the element
  $t_i = s_{i-1} \cdots s_1 t s_1 \cdots s_{i-1} \in \gnd$,
  and let $\mfg{[i,j]} \cong \mathbb Z/d\mathbb Z \wr \mfs{j-i+1}$
  be the
  subgroup of $\gnd$ generated by $\{ t_i, s_i, \dotsc, s_{j-1}\}$.
For $k = 0,\dotsc,d-1$, use (\ref{eq:Kdef}) to
define the subgroup
%
\begin{equation}\label{eq:cshiftyoung}
  \mfg{\bos \lambda, k} \defeq \mfg{K^k_1} \cdots \mfg{K^k_{r_k}}
  \cong \mfg{\lambda^k_1} \times \cdots \times \mfg{\lambda^k_{r_k}},
\end{equation}
of $\gnd$,
and finally
define the {\em Young subgroup}
\begin{equation}\label{eq:Youngdef}
  \mfg{\bos \lambda} \defeq
  \mfg{\bos \lambda, 0} \cdots \mfg{\bos \lambda, d-1} \cong
  \prod_{k=0}^{d-1}
  \big( \mfg{\lambda^k_1} \times \cdots \times \mfg{\lambda^k_{r_k}} \big)
\end{equation}
of $\gnd$.  Each element $y \in \gnd$ factors uniquely as $y_0 \cdots y_{d-1}$
with $y_k \in \mfg{\bos \lambda, k}$.

Several natural representations of $\gnd$ are defined by using
symmetric group representations and induction from $\mfg{\bos\lambda}$.
First, observe that
the subgroup of $\gnd$ generated by $s_1,\dotsc,s_{n-1}$ is isomorphic to $\sn$,
and that each $r$-dimensional $\sn$-representation $\rho$
can trivially be extended to a
$r$-dimensional $\gnd$-representation in at least $d$ ways:
by defining $\rho(t) = \zeta^k I$ for $k = 0,\dotsc,d-1$.
If the character of the $\sn$-representation is
$\chi$, call its extension $\delta_k \chi$.
Thus the two one-dimensional $\sn$-representations
\begin{equation*}
  \begin{alignedat}{2}
    1: s_i &\mapsto 1 &\qquad (w &\mapsto 1 \text{ for all $w \in \sn$}) ,\\
    \epsilon: s_i &\mapsto -1 &\qquad (w &\mapsto (-1)^{\inv(w)} \text{ for all $w \in \sn$})   
    \end{alignedat}
\end{equation*}
yield
$2d$ one-dimensional representations of $\gnd$:
\begin{equation}\label{eq:onediml}
  \begin{alignedat}{2}
    \delta_k: (s_i,t) &\mapsto (1, \zeta^k), &\qquad
    (g = (\gamma, w) &\mapsto (\gamma_1 \cdots \gamma_n)^k
    \text{ for all $g \in \gnd$}),\\
    \delta_k\epsilon: (s_i,t) &\mapsto (-1, \zeta^k), &\qquad
    (g = (\gamma, w) &\mapsto (-1)^{\inv(w)}(\gamma_1\cdots\gamma_n)^k
    \text{ for all $g \in \gnd$}),
  \end{alignedat}
\end{equation}
for $k = 0,\dotsc,d-1$.
Here, $\inv(w)$ denotes the Coxeter length of $w$.
(See, e.g., \cite[p.\,15]{BBCoxeter}.)
Next, observe that
for any $d$-tuple $(H_0, \dotsc, H_{d-1})$
of subgroups of a group $G$ which satisfy
\begin{equation}\label{eq:prodgroup}
  H \defeq H_0 \cdots H_{d-1} \cong
  H_0 \times \cdots \times H_{d-1},
  \end{equation}
and
characters $\theta_0, \dotsc, \theta_{d-1}$ of these,
we have that the function $\theta = \theta_0 \otimes \cdots \otimes \theta_{d-1}$
defined by $\theta(h_0 \cdots h_{d-1}) = \theta_0(h_0) \cdots \theta_{d-1}(h_{d-1})$
is a character of $H$, and ${\theta \upparrow_H^G}$ is a character of $G$.
In particular, 
the Young subgroup $\mfg{\bos\lambda}$ has the form (\ref{eq:prodgroup})
with $H_k = \mfg{\bos\lambda, k}$.
For every $d$-tuple
$\bos \beta = (\beta_0,\dotsc,\beta_{d-1}) \in \{1, \epsilon \}^d$
of one-dimensional symmetric group characters
we have
the one-dimensional $\mfg{\bos\lambda}$-character
\begin{equation}\label{eq:betadelta}
  \delta_0\beta_0 \otimes \cdots \otimes \delta_{d-1}\beta_{d-1},
\end{equation}
the corresponding monomial $\gnd$-character
\begin{equation}\label{eq:betadeltaind}
  \bos \beta^{\bos \lambda} \defeq
  (\delta_0\beta_0 \otimes
  \cdots \otimes \delta_{d-1}\beta_{d-1})
  \upparrow_{\mfg{\bos \lambda}}^{\gnd},
\end{equation}
and the basis $\{ \bos\beta^{\bos\lambda} \,|\, \bos\lambda \vdash n \}$
of $\trsp(\gnd)$.
The irreducible character basis
$\{ \chi^{\bos\lambda} \,|\, \bos\lambda \vdash n \}$
of $\trsp(\gnd)$ can be defined somewhat similarly.
Given $\bos\lambda = (\lambda^0, \dotsc, \lambda^{d-1}) \vdash n$,
define the $d$-partition
$\bos\lambda^{\ntnsp\bullet} = (|\lambda^0|, \dotsc, |\lambda^{d-1}|)$,
and
the $\mfg{\bos\lambda^{\ntnsp\bullet}}$-character
\begin{equation*}
  \delta_0\smash{\chi^{\lambda^0}} \ntnsp \otimes \cdots \otimes \delta_{d-1}\smash{\chi^{\lambda^{d-1}}},
  \end{equation*}
where $\smash{\chi^{\lambda^k}}$
is the irreducible
$\mfs{|\lambda^k|}$-character
indexed by the partition $\lambda^k$.
The corresponding induced
characters
\begin{equation}\label{eq:irrcharbasis}
  \chi^{\bos\lambda} =
  (\delta_0\smash{\chi^{\lambda^0}} \ntnsp \otimes \cdots \otimes \delta_{d-1}\smash{\chi^{\lambda^{d-1}}})
  \upparrow_{\mfg{\bos \lambda^{\bullet}}}^{\gnd}
\end{equation}
are the irreducible characters of $\gnd$.  (See, e.g., \cite[p.\,219]{AK}.)

For the purpose of creating generating functions for characters 
$\bos\beta^{\bos\lambda}$, it will be convenient to realize each as
the character of a submodule of $\mathbb C[\gnd]$, with $\gnd$ acting
by left multiplication.  To do this, we consider an arbitrary finite
group $G$, a subgroup $H$, an $H$-character $\theta$,
and the element
\begin{equation}
  T_H^\theta \defeq \sum_{h \in H} \theta(h^{-1})h \in \mathbb C[G].
\end{equation}
\begin{prop}\label{p:inductsum}
  Let $H$ be a subgroup of a finite group $G$ and let
  $\rho$ be a one-dimensional complex representation of $H$
  with character $\theta$ ($= \rho$).
  Let $U = (u_1,\dotsc,u_r)$
  be a transversal of representatives of cosets of $H$ in $G$.
  Let $G$ act by left multiplication on the submodule
  \begin{equation}\label{eq:vdefn}
  V := \spn_{\mathbb C} \{ u_i T_H^\theta \,|\, 1 \leq i \leq r \}
  \end{equation}
  of $\mathbb C[G]$.  Then $V$ is a $G$-module with 
  character $\theta \upparrow_H^G$.
\end{prop}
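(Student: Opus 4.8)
The plan is to isolate the single algebraic identity that does all the work, and then deduce $G$-invariance and the character by routine bookkeeping. The crucial fact is that $T_H^\theta$ is a $\theta$-eigenvector for left multiplication by $H$: for every $h' \in H$,
\begin{equation*}
  h' T_H^\theta = \theta(h')\, T_H^\theta.
\end{equation*}
To see this I would reindex the defining sum, writing $h' T_H^\theta = \sum_{h \in H} \theta(h^{-1})\, h'h$ and substituting $k = h'h$; the coefficient of $k$ is then $\theta(k^{-1}h') = \theta(k^{-1})\theta(h')$, where the second equality uses that $\theta$ is one-dimensional, hence a homomorphism $H \to \mathbb C^\times$. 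Factoring out $\theta(h')$ gives the identity. This is the only place where one-dimensionality of $\rho$ is essential, and getting it right is the crux of the argument.

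With this in hand, take $\{u_1,\dotsc,u_r\}$ to be representatives of the \emph{left} cosets $u_iH$ (the side forced by the left action). Each $g \in G$ satisfies $gu_i = u_j h$ for a unique index $j$ and a unique $h \in H$, so
\begin{equation*}
  g \cdot (u_i T_H^\theta) = (gu_i)\, T_H^\theta = u_j (h\, T_H^\theta) = \theta(h)\, u_j T_H^\theta \in V,
\end{equation*}
which shows that $V$ is a $G$-submodule of $\mathbb C[G]$.

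To identify the character I would proceed as follows. Since $u_i T_H^\theta$ is supported on the coset $u_iH$ with every coefficient a nonzero root of unity, the vectors $\{u_i T_H^\theta\}$ have pairwise disjoint supports and are therefore linearly independent; hence $\dim V = r = [G:H] = \dim(\theta \upparrow_H^G)$. Moreover the displayed computation shows that, in the basis $\{u_i T_H^\theta\}$, each $g$ permutes basis vectors up to the scalars $\theta(h)$ in exactly the way $g$ acts on the standard induced module $\mathbb C[G]\otimes_{\mathbb C[H]}\mathbb C_\theta$ in its basis $\{u_i \otimes 1\}$. Consequently $u_i \otimes 1 \mapsto u_i T_H^\theta$ is a $G$-module isomorphism, so $V \cong \theta \upparrow_H^G$. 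Equivalently, reading the trace off the same computation, only indices with $u_i^{-1} g u_i \in H$ contribute, each contributing $\theta(u_i^{-1} g u_i)$, which is exactly the induced-character formula.

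I expect no genuine obstacle here: the content is concentrated in the eigenvector identity, and the only subtlety demanding care is the left-versus-right coset convention, since the wrong choice would break either the $G$-invariance computation or the disjoint-support count.
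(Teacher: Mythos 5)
Your proof is correct and follows essentially the same route as the paper's: both hinge on the computation $g\,(u_i T_H^\theta) = \theta(u_j^{-1}gu_i)\, u_j T_H^\theta$ (you via the isolated eigenvector identity $h'T_H^\theta = \theta(h')T_H^\theta$, the paper by reindexing the sum inline) and then identify the resulting matrices with those of the standard induced representation. Your explicit check that the $u_i T_H^\theta$ are linearly independent via disjoint supports is a small point the paper leaves implicit, but it does not change the substance of the argument.
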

\begin{proof}
  To see that $V$ is a $G$-module,
  consider the action of $g \in G$ on the $j$th element of
  the defining basis of $V$.
  Let $u_iH$ be the unique coset satisfying
$gu_jH = u_iH$, i.e., $u_i^{-1}gu_j \in H$. Then we have
  \begin{equation}
  \begin{aligned}
  g u_j T_H^\theta &=
    g u_j \sum_{h\in H} \theta(h^{-1})h
  = u_i \sum_{h\in H} \theta(h^{-1}) u_i^{-1}gu_j h 
  = u_i \sum_{h' \in H} \theta( (h')^{-1}u_i^{-1}gu_j ) h' \\
  &= \theta(u_i^{-1}gu_j) u_i T^\theta_H,
  \end{aligned}
  \end{equation}
  since $\theta = \rho$
  is a homomorphism. 
It follows that in the $j$th column of the matrix representing $g$,
all components are $0$ except for the $i$th,
which is $\theta(u_i^{-1}gu_j)$.
But this is precisely the formula for entries of the matrix
$\rho \upparrow_H^G(g)$.  (See, e.g., \cite[Defn.\,1.12.2]{Sag}.)
\end{proof}

For $\chi = {\theta \upparrow_H^G}$, Proposition~\ref{p:inductsum}
allows us to express $T_G^\chi$ as a sum of conjugates of $T_H^\theta$.

\begin{lem}\label{l:indchargenfn}
  Let groups $G$, $H$, transversal $U = (u_1,\dotsc,u_r)$,
  $H$-character $\theta$, and $G$-module $V$ be as in 
  Proposition~\ref{p:inductsum},
  and let $A = (a_{i,j})$ 
  be the matrix of $g \in G$
  with respect to the defining basis (\ref{eq:vdefn}) of $V$.
  Then $a_{i,j}$ equals the coefficient of $g^{-1}$ in
    $u_j T_H^\theta u_i^{-1}$.
  In particular
  if $\chi$ is the character of $V$, then
  we have the identity
  \begin{equation}\label{eq:gpalggenfn}
    \sum_{i=1}^r u_i T_H^\theta u_i^{-1} = \sum_{g \in G} \chi(g) g^{-1}.
  \end{equation}
  in $\mathbb C[G]$.
\end{lem}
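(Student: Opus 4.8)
The plan is to establish the entrywise claim first and then derive the summed identity (\ref{eq:gpalggenfn}) as a consequence by tracking the diagonal. Recall from Proposition~\ref{p:inductsum} that the action of $g$ on the basis element $u_j T_H^\theta$ is $g u_j T_H^\theta = \theta(u_i^{-1}gu_j)\, u_i T_H^\theta$, where $u_i H$ is the unique coset with $u_i^{-1}gu_j \in H$, and all other coefficients vanish. So the matrix entry $a_{i,j}$ equals $\theta(u_i^{-1}gu_j)$ when $u_i^{-1}gu_j \in H$ and $0$ otherwise. The first step is therefore to compute the coefficient of $g^{-1}$ in the element $u_j T_H^\theta u_i^{-1}$ and verify it matches this description.

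First I would expand $u_j T_H^\theta u_i^{-1} = \sum_{h \in H} \theta(h^{-1})\, u_j h u_i^{-1}$ and ask when a group-element summand equals $g^{-1}$, i.e.\ when $u_j h u_i^{-1} = g^{-1}$, equivalently $h = u_j^{-1} g^{-1} u_i = (u_i^{-1} g u_j)^{-1}$. Such an $h$ lies in $H$ precisely when $u_i^{-1}gu_j \in H$, and in that case it is unique, contributing coefficient $\theta(h^{-1}) = \theta(u_i^{-1}gu_j)$. When $u_i^{-1}gu_j \notin H$, no summand equals $g^{-1}$ and the coefficient is $0$. This matches $a_{i,j}$ exactly, proving the first assertion.

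For the summed identity, the key observation is that the character $\chi(g)$ is the trace of the matrix $A$, namely $\chi(g) = \sum_{i=1}^r a_{i,i}$. By the entrywise result, $a_{i,i}$ is the coefficient of $g^{-1}$ in $u_i T_H^\theta u_i^{-1}$. Summing over $i$, the total coefficient of $g^{-1}$ in $\sum_{i=1}^r u_i T_H^\theta u_i^{-1}$ equals $\sum_{i=1}^r a_{i,i} = \chi(g)$. Since this holds for every $g \in G$ (equivalently, every $g^{-1}$ ranges over all of $G$), we may read off the element $\sum_{i=1}^r u_i T_H^\theta u_i^{-1}$ coefficient by coefficient as $\sum_{g \in G} \chi(g)\, g^{-1}$, which is (\ref{eq:gpalggenfn}).

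The only subtle point, and the step I would be most careful about, is the bookkeeping in the change of summation variable $h = (u_i^{-1}gu_j)^{-1}$: one must confirm both that membership $h \in H$ is equivalent to $u_i^{-1}gu_j \in H$ and that the surviving term carries weight $\theta(u_i^{-1}gu_j)$ rather than its inverse, so that it agrees with the matrix entry supplied by Proposition~\ref{p:inductsum}. Everything else is a direct trace computation, so I expect no genuine obstacle beyond this indexing care.
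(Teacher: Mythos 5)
Your proof is correct and follows essentially the same route as the paper: it extracts the entrywise formula $a_{i,j}=\theta(u_i^{-1}gu_j)$ (or $0$) from the action computed in Proposition~\ref{p:inductsum}, matches it against the coefficient of $g^{-1}$ in $u_jT_H^\theta u_i^{-1}$ via the substitution $h=(u_i^{-1}gu_j)^{-1}$, and then sums the diagonal to get (\ref{eq:gpalggenfn}). The indexing point you flag is exactly the one the paper's proof handles, and your bookkeeping resolves it the same way.
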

\begin{proof}
  By the proof of Proposition~\ref{p:inductsum},
  we have $a_{i,j} = \theta(u_i^{-1}gu_j)$
  if some $h \in H$ satisfies
  $g = u_ihu_j^{-1}$,
  and is $0$ otherwise.  On the other hand, we have
    \begin{equation}\label{eq:findg}
    u_j T_H^\theta u_i^{-1} = \sum_{h\in H} \theta(h^{-1}) u_jhu_i^{-1}.
  \end{equation}
    If there is no $h \in H$ satisfying $g^{-1} = u_jhu_i^{-1}$,
    then the coefficient of $g^{-1}$ in (\ref{eq:findg}) is $0$.
    Otherwise, the coefficient of $g^{-1}$ is
  \begin{equation*}
    \theta(h^{-1}) = \theta(u_i^{-1}gu_j).
  \end{equation*}
  It follows that $a_{i,j}$
  is equal to the coefficient of $g^{-1}$ in $u_jT_H^\theta u_i^{-1}$.
  Thus $\chi(g) = \sum_i a_{i,i}$
  is equal to the coefficient of $g^{-1}$ in $\sum_i u_i T_H^\theta u_i^{-1}$.
  \end{proof}

For
$G = \gnd$, $H = \mfg{\bos\lambda}$, and
$\theta$ as in (\ref{eq:betadelta}), 
the module
$V$
(\ref{eq:vdefn}) has a particularly nice form.
The element $T_H^\theta$ factors as
$T_{\mfg{\bos\lambda,0}}^{\delta_0\beta_0} \cdots T_{\mfg{\bos\lambda,d-1}}^{\delta_{d-1}\beta_{d-1}}$,
and each coset
$u\mfg{\bos\lambda}$
of $\mfg{\bos\lambda}$
has a unique representative
$g = (\gamma,w)$ satisfying $\gamma_1 = \cdots = \gamma_n = 0$ and
$w_i < w_{i+1}$ for $i, i+1$ belonging to the same block of
$\mathbf K(\bos \lambda)$,
i.e.,
\begin{equation}\label{eq:mincosetreps}
  w_1 < \cdots < w_{\lambda^0_1},
  \qquad
  w_{\lambda^0_1 + 1} < \cdots < w_{\lambda^0_1 + \lambda^0_2}, \dotsc,
  \qquad
  w_{n-\lambda^{d-1}_{r_{d-1}}+1} < \cdots < w_n.
  \end{equation}
Letting $\glambdamin$ be the set of such coset representatives,
we have
\begin{equation*}
  V = V(\bos\lambda,\bos\beta) =
  \spn_{\mathbb C} \{u T_{\mfg{\bos\lambda,0}}^{\delta_0\beta_0} \cdots
  T_{\mfg{\bos\lambda,d-1}}^{\delta_{d-1}\beta_{d-1}} \,|\, u \in \mfg{\bos\lambda}^- \},
\end{equation*}
and the following special case of Lemma~\ref{l:indchargenfn}.
\begin{cor}\label{c:indchargenfn}
  Fix a $d$-partition $\bos\lambda \vdash n$.
  For each one-dimensional $\mfg{\bos\lambda}$-character $\theta$
  of the form (\ref{eq:betadelta}),
  the monomial $\gnd$-character
  $\bos \beta^{\bos \lambda} = \theta \upparrow_{\,\mfg{\bos \lambda}}^{\,\gnd}$
  satisfies
  \begin{equation}\label{eq:indchargenfn}
    \sum_{u \in \mfg{\bos \lambda}^-} \nTksp u T_{\mfg{\bos \lambda}}^\theta u^{-1} =
    \sum_{g \in \gnd} \bos\beta^{\bos \lambda}(g^{-1}) g.
  \end{equation}
\end{cor}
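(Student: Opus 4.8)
The plan is to recognize Corollary~\ref{c:indchargenfn} as a direct specialization of Lemma~\ref{l:indchargenfn}, so that the only genuine work is (i) confirming that $\glambdamin$ really is a transversal for $\mfg{\bos\lambda}$ in $\gnd$, and (ii) matching the two sides after the substitution $g \mapsto g^{-1}$. First I would set $G = \gnd$, $H = \mfg{\bos\lambda}$, take $\theta$ to be the one-dimensional character of the form (\ref{eq:betadelta}), and let $\chi = \bos\beta^{\bos\lambda} = \theta\upparrow_{\,\mfg{\bos\lambda}}^{\,\gnd}$. Since $\theta$ is one-dimensional it is a representation, so the hypotheses of Proposition~\ref{p:inductsum} and hence of Lemma~\ref{l:indchargenfn} are met. (The factored product structure of $\theta$ over the $\mfg{\bos\lambda,k}$ is not needed for this step; it is what later gives $V$ its convenient factored form.)

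Next I would justify that $\glambdamin$ is a complete, irredundant set of left-coset representatives for $\mfg{\bos\lambda}$ in $\gnd$, which is exactly the content of the paragraph introducing it. Writing a coset element as $g = (\gamma,w)$, I would observe that $\mfg{\bos\lambda}$ contains $t_i$ for every $i \in [n]$ (each block subgroup $\mfg{[a,b]} \cong \mathbb Z/d\mathbb Z \wr \mfs{b-a+1}$ contains $t_a$ and its $s$-conjugates $t_{a+1},\dotsc,t_b$), so right-multiplication by suitable powers of these generators normalizes all the exponents to $\gamma_1 = \cdots = \gamma_n = 0$; the remaining $s$-generators then reduce $w$ to the unique block-increasing representative (\ref{eq:mincosetreps}), the blocks being those of $\mathbf K(\bos\lambda)$. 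A count confirms this: the number of such $w$ is the multinomial coefficient $n!/\prod_{k,j}\lambda^k_j!$, which equals $|\gnd|/|\mfg{\bos\lambda}| = (d^n n!)/(d^n\prod_{k,j}\lambda^k_j!) = [\gnd:\mfg{\bos\lambda}]$, so $\glambdamin$ meets each coset exactly once.

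With the transversal in hand, I would invoke identity (\ref{eq:gpalggenfn}) of Lemma~\ref{l:indchargenfn} with $U = \glambdamin$, giving
\[
  \sum_{u \in \mfg{\bos\lambda}^-} u\, T_{\mfg{\bos\lambda}}^\theta\, u^{-1}
  = \sum_{g \in \gnd} \bos\beta^{\bos\lambda}(g)\, g^{-1},
\]
and then reindex the right-hand side by the bijection $g \mapsto g^{-1}$ of $\gnd$: replacing $g$ by $g^{-1}$ sends $\sum_{g} \bos\beta^{\bos\lambda}(g)\,g^{-1}$ to $\sum_{g} \bos\beta^{\bos\lambda}(g^{-1})\,g$, which is precisely (\ref{eq:indchargenfn}).

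The main (indeed only) obstacle is the transversal claim of the second paragraph, since the rest is a mechanical substitution. As that claim has already been established in the discussion preceding the statement, the Corollary follows at once; the proof amounts to reading Lemma~\ref{l:indchargenfn} in the present setting and inverting the summation index.
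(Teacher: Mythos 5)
Your proposal is correct and takes essentially the same approach as the paper: the paper also obtains Corollary~\ref{c:indchargenfn} by specializing Lemma~\ref{l:indchargenfn} to $G = \gnd$, $H = \mfg{\bos\lambda}$ with transversal $\glambdamin$, and then reindexing the sum via $g \mapsto g^{-1}$. Your second paragraph merely supplies details (clearing the exponents with the $t_i$'s, sorting within blocks, and the index count) for the transversal claim that the paper asserts without proof in the paragraph preceding the corollary.
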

For $d = 1,2$,
the group $\gnd$ (equal to the symmetric group or the hyperoctahedral group)
has real-valued irreducible characters.
Therefore each group element is conjugate to its inverse, and
the final sum of (\ref{eq:indchargenfn}) may be expressed as 
$\sum_{g \in \gnd} \bos\beta^{\bos\lambda}(g)g$.

\section{Main result}\label{s:main}

A generalization of the generating functions (\ref{eq:lmw1}) -- (\ref{eq:lmw2})
to monomial characters of $\gnd$ requires a polynomial ring and a
$|\gnd| = d^nn!$-dimensional subspace
analogous to the $n!$-dimensional span of the functions (\ref{eq:imm}).
Let $C_d = \{ \zeta^k \,|\, k \in \mathbb Z/d\mathbb Z \}$
be the subgroup of $\mathbb C$ consisting of $d$th roots of unity, and
for any subset $M \subseteq [n]$, let $C_dM$ be the complex numbers of the form
$\{ \zeta^km \,|\, k \in \mathbb Z/d\mathbb Z, m \in M \}$,
and define the set $x = \{ x_{i,j} \,|\, i \in [n], j \in C_d[n] \}$
of $dn^2$ variables.  One can think of $x$ as a collection of $d$
matrices of $n^2$ variables.
For example when $n = 2$ and $d = 3$, the variables are
\begin{equation}\label{eq:matrices}
  \begin{bmatrix} x_{1,1} & x_{1,2}\\ x_{2,1} & x_{2,2} \end{bmatrix},
  \qquad
  \begin{bmatrix} x_{1,\dt1} & x_{1,\dt2}\\ x_{2,\dt1} & x_{2,\dt2} \end{bmatrix},
  \qquad
  \begin{bmatrix} x_{1,\ddt1} & x_{1,\ddt2}\\ x_{2,\ddt1} & x_{2,\ddt2} \end{bmatrix},
\end{equation}
where we define
$\dt m \defeq \zeta m$, $\ddt m \defeq \zeta^2 m$
for variable subscripts $m = 1,2$, e.g.,
$x_{\smash{2,\ddt 1}} = x_{2, \zeta^2}$.

For $u \in \sn$, $g \in \gnd$,
write
\begin{equation*}
  x^{u,g} \defeq x_{u_1,g_1} \cdots x_{u_n,g_n},
\end{equation*}
and define the {\em $\gnd$-immanant subspace} of $\mathbb C[x]$ to be
\begin{equation*}
  \spn_{\mathbb C} \{ x^{e,g} = \permmon xg \,|\, g \in \gnd \}.
\end{equation*}
It is easy to see that these monomials satisfy
\begin{equation}\label{eq:ultor}
  x^{u,g} = x^{e,u^{-1}g}
\end{equation}
for all $u \in \sn$, $g \in \gnd$.
Thus for any fixed $u \in \sn$, the $\gnd$-immanant subspace
of $\mathbb C[x]$ may also be expressed as
$\spn_{\mathbb C} \{ x^{u,g} \,|\, g \in \gnd \}$.
The left- and right-regular representations of $\gnd$ define
left- and right-actions of $\gnd$ on the $\gnd$-immanant space,
\begin{equation}\label{eq:actonimm}
  h_1 \circ x^{e,g} \circ h_2 = x^{e,h_1gh_2},
\end{equation}
for $g, h_1, h_2 \in \gnd$.
For any function $\theta: \gnd \rightarrow \mathbb C$, define the
{\em type-$\gnd$ $\theta$-immanant} to be the generating function 
\begin{equation}\label{eq:gimmdef}
  \gimm{\theta}(x) = \ntksp \sum_{g \in \gnd} \ntksp \theta(g^{-1}) x^{e,g}
\end{equation}
for evaluations of $\theta$.
Our counterintuitive use of $g^{-1}$ in place of $g$ is necessitated by
Proposition~\ref{p:inductsum} -- Corollary~\ref{c:indchargenfn}.
(See also~\cite[Eq.\,(1)]{StemImm}.)
By the comment following Corollary~\ref{c:indchargenfn},
symmetric group
and hyperoctahedral group ($\bn \cong \mathbb Z/2\mathbb Z \wr \sn$)
immanants can be written
\begin{equation}\label{eq:hyperoctimm}
  \simm{\theta}(x) = \sum_{w \in \sn} \theta(w) x^{e,w},
  \qquad
  \bimm{\theta}(x) = \sum_{w \in \bn} \theta(w) x^{e,w}.
\end{equation}
For economy, we will generally supress $\sn$ from the notation of symmetric
group immanants.


Define the $d$ $n \times n$ matrices $Q_0(x), \dotsc, Q_{d-1}(x)$
by $Q_k(x) = (q_{i,j,k}(x))_{i,j \in [n]}$, where
\begin{equation}\label{eq:PandD2}
  q_{i,j,k}(x) = x_{i,j} + \zeta^{-k} x_{i,\zeta j} + \zeta^{-2k} x_{i,\zeta^2 j} +
  \cdots + \zeta^{-(d-1)k}x_{i,\zeta^{(d-1)}j}.
\end{equation}
The permanent and determinant of these matrices
are equal to $\gnd$-immanants for the one-dimensional characters
$\delta_0,\dotsc,\delta_{d-1}$, $\delta_0\epsilon, \dotsc, \delta_{d-1}\epsilon$
of $\gnd$.
Specifically, we have
\begin{equation}\label{eq:PandDimm2}
  \begin{gathered}
    \perm(Q_k(x)) =
    \nTksp \sum_{g = (\gamma, w) \in \gnd} \nTksp
    (\gamma_1 \cdots \gamma_n)^{-k} x^{e,g}
    = \gimm{\delta_k}(x)
    ,\\
    \det(Q_k(x)) =
    \nTksp \sum_{g = (\gamma, w) \in \gnd} \nTksp
    (-1)^{\inv(w)} (\gamma_1 \cdots \gamma_n)^{-k} x^{e,g}
    = \gimm{\delta_k\epsilon}(x).
  \end{gathered}
\end{equation}

More generally, we obtain
$\gnd$-analogs of the
Littlewood-Merris-Watkins
generating functions
(\ref{eq:lmw1}) -- (\ref{eq:lmw2})
by taking sums of products of immanants $\imm{\eta^\mu}^{\mfs m}$,
$\imm{\epsilon^\mu}^{\mfs m}$
of submatrices of $Q_0(x), \dotsc, Q_{d-1}(x)$,
where
$\eta^\mu = {1 \upparrow_{\mfs \mu}^{\mfs m}}$ and
$\epsilon^\mu = {\epsilon \upparrow_{\mfs \mu}^{\mfs m}}$ are
monomial
characters of $\mfs m$, for $m \leq n$.


\begin{thm}\label{t:main}
  Fix $d$-partition $\bos \lambda = (\lambda^0,\dotsc,\lambda^{d-1}) \vdash n$,
  and let
  $a_k = |\lambda^k|$,
  $r_k = \ell(\lambda^k)$.
  Fix character sequence
  $\bos \beta = (\beta_0,\dotsc,\beta_{d-1}) \in \{ 1, \epsilon \}^d$
  and define
  \begin{equation*}
    \beta_k^{\lambda^k} = \beta_k \upparrow_{\mfs{\lambda^k}}^{\mfs{a_k}}
    \in \{ \epsilon^{\lambda^k}, \eta^{\lambda^k} \}, \quad k = 0,\dotsc,d-1.
  \end{equation*}    
Then we have
\begin{equation}\label{eq:immprod3}
  \gimm{\bos \beta^{\bos \lambda}}(x) =
  \nTksp
  \sum_{(I_0,\dotsc,I_{d-1})}
  \nTksp
  \imm{\beta_0^{\lambda^0}}(Q_0(x)_{I_0,I_0})
    \cdots
  \imm{\beta_{d-1}^{\lambda^{d-1}}}(Q_{d-1}(x)_{I_{d-1},I_{d-1}}),
\end{equation}
%
where the sum is over all ordered set partitions of $[n]$
of type
$\bos \lambda^{\ntnsp\bullet} = (a_0,\dotsc,a_{d-1})$.

\end{thm}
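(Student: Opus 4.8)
The plan is to prove \eqref{eq:immprod3} by comparing, for each $g = (\gamma, w) \in \gnd$, the coefficient of the monomial $x^{e,g}$ on its two sides. Both sides lie in the $\gnd$-immanant subspace of $\mathbb C[x]$, and the monomials $\{x^{e,g} \,|\, g \in \gnd\}$ are linearly independent: the factor contributed by row $i$ to $x^{e,g} = \prod_i x_{i,\zeta^{\gamma_i}w_i}$ determines the pair $(\gamma_i, w(i))$, so distinct $g$ give distinct monomials. Hence it suffices to match coefficients. On the left, the definition \eqref{eq:gimmdef} of the type-$\gnd$ immanant shows immediately that the coefficient of $x^{e,g}$ in $\gimm{\bos\beta^{\bos\lambda}}(x)$ is $\bos\beta^{\bos\lambda}(g^{-1})$.

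First I would expand the right-hand side. Writing each symmetric group immanant as a sum over block-preserving permutations and substituting the definition \eqref{eq:PandD2} of the entries $q_{i,j,k}$ — a discrete Fourier transform over $\mathbb Z/d\mathbb Z$ with weights $\zeta^{-lk}$ — turns each product $\prod_k \imm{\beta_k^{\lambda^k}}(Q_k(x)_{I_k,I_k})$ into a sum of monomials $x^{e,g}$. Collecting terms, I expect the coefficient of $x^{e,g}$ on the right to be
\[
  \sum_{\substack{(I_0,\dotsc,I_{d-1}) \\ w(I_k) = I_k \ \forall k}}
  \ \prod_{k=0}^{d-1} \beta_k^{\lambda^k}(w|_{I_k}) \, \zeta^{-k \sum_{i \in I_k} \gamma_i},
\]
the outer sum running over ordered set partitions of $[n]$ of type $(a_0,\dotsc,a_{d-1})$ that $w$ preserves block by block. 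Here the power of $\zeta$ records the cumulative $\zeta^{-lk}$ weights color by color, and $w|_{I_k}$ is the restriction of $w$ read off via the order isomorphism $I_k \cong \mfs{a_k}$.

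It then remains to identify this sum with $\bos\beta^{\bos\lambda}(g^{-1})$. For this I would invoke Corollary~\ref{c:indchargenfn} (equivalently, the standard induced-character formula for $\bos\beta^{\bos\lambda} = \theta\upparrow_{\mfg{\bos\lambda}}^{\gnd}$), which writes $\bos\beta^{\bos\lambda}(g^{-1})$ as the sum of $\theta(u^{-1}g^{-1}u)$ over the minimal coset representatives $u \in \glambdamin$ with $u^{-1}g^{-1}u \in \mfg{\bos\lambda}$. The crux is a decoupling: since $\mfg{\bos\lambda} \cong \prod_k \mfg{\bos\lambda,k}$ and $\theta = \bigotimes_k \delta_k\beta_k$, the index set of this sum factors as a choice of ordered set partition $(I_0,\dotsc,I_{d-1}) = (u(A_0),\dotsc,u(A_{d-1}))$ distributing $[n]$ among the $d$ colors — where $A_k$ is the union of the standard blocks $K^k_1,\dotsc,K^k_{r_k}$ — followed by a per-color minimal coset representative of $\mfs{\lambda^k}$ in $\mfs{a_k}$. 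Using the wreath-product conjugation formula $u^{-1}(\gamma,w)u = (u^{-1}\cdot\gamma,\, u^{-1}wu)$, the factor $\delta_k$ collapses to exactly $\zeta^{-k\sum_{i\in I_k}\gamma_i}$ whenever $w$ preserves $I_k$, while the per-color inner sum $\sum_{u_k}\beta_k(u_k^{-1}(w|_{I_k})u_k)$ is, by the induced-character formula for $\mfs{\lambda^k}\le\mfs{a_k}$, precisely $\beta_k^{\lambda^k}(w|_{I_k})$. Matching the two expressions finishes the proof.

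I expect the main obstacle to be the bookkeeping of this final decoupling: verifying that conjugation by $u$ permutes the colors so that the $\delta_k$-weight reduces to $\zeta^{-k\sum_{i\in I_k}\gamma_i}$ exactly when $w$ stabilizes $I_k$, and that the factorization of $\glambdamin$ into a coarse color-distribution and fine per-color representatives is compatible, block by block, with the membership condition $u^{-1}g^{-1}u \in \mfg{\bos\lambda}$. Once this is set up cleanly, the per-color step is just the classical ($d=1$) induced-character identity underlying \eqref{eq:lmw1}–\eqref{eq:lmw2}.
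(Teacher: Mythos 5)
Your proposal is correct, and every ingredient you name does work, but it reorganizes the argument rather than following the paper's route. The paper argues at the level of the whole generating function inside the group algebra: writing $\theta$ for the one-dimensional $\mfg{\bos\lambda}$-character (\ref{eq:betadelta}), Corollary~\ref{c:indchargenfn} turns the left side of (\ref{eq:immprod3}) into $\sum_{u \in \glambdamin} u T_{\mfg{\bos\lambda}}^{\theta} u^{-1} \circ x^{e,e}$, while the right side is expanded by the Littlewood--Merris--Watkins identities (\ref{eq:lmw1})--(\ref{eq:lmw2}) into products of determinants and permanents indexed by ordered set partitions $\mathbf J$ of the \emph{fine} type $\bos\lambda$, converted by (\ref{eq:PandDimm2}) into products of small wreath immanants, and then matched to the same group-algebra expression through a single bijection $\mathbf J \mapsto u(\mathbf J)$ with $\glambdamin$. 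You instead compare coefficients of each monomial $x^{e,g}$: on the left this is $\bos\beta^{\bos\lambda}(g^{-1})$, which you expand by the pointwise Frobenius induced-character formula (exactly the coefficientwise reading of Corollary~\ref{c:indchargenfn}); on the right you expand the entries $q_{i,j,k}$ directly, never invoking (\ref{eq:lmw1})--(\ref{eq:lmw2}) as polynomial identities; and your bijection is hierarchical --- first a coarse ordered set partition of type $(a_0,\dotsc,a_{d-1})$ stabilized blockwise by $w$, then per-color minimal coset representatives of $\mfs{\lambda^k}$ in $\mfs{a_k}$ --- with the classical $d=1$ induced-character identity closing each color. What your route buys is self-containedness: LMW enters only through its character-theoretic kernel, which you rederive per color. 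What it costs is the wreath-conjugation bookkeeping that the paper's formalism $u T^{\theta}_{\mfg{\bos\lambda}} u^{-1} \circ x^{e,e}$ absorbs automatically; I checked the points you flag as the main obstacle --- the weight $\zeta^{-k\sum_{i \in I_k}\gamma_i}$ arising from both the Fourier weights and the $\delta_k$-value on the conjugated element, the vanishing of terms with $w(I_k) \neq I_k$, and the factorization of $\glambdamin$ (with its membership condition) into coarse color data and per-color minimal representatives --- and each is correct as stated, so your plan goes through. (Your occasional interchange of $w|_{I_k}$ with its inverse is harmless, since symmetric-group characters take equal values on an element and its inverse.)
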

\begin{proof}
  Define the
  $\mfg{\bos\lambda}$-character
  $\theta = \delta_0\beta_0 \otimes \cdots \otimes \delta_{d-1} \beta_{d-1}$
  and let
  $\bos\beta^{\bos\lambda} = \theta \upparrow_{\,\mfg{\bos \lambda}}^{\,\gnd}$.
  By Corollary~\ref{c:indchargenfn}, (\ref{eq:actonimm}), 
  and (\ref{eq:gimmdef}),
  we can express the left-hand side of (\ref{eq:immprod3}) as
    \begin{equation}\label{eq:lhsstimmconj}
      \sum_{g \in \gnd} \bos\beta^{\bos\lambda}(g^{-1}) \circ x^{e,g} =
    \sum_{g \in \gnd} \bos\beta^{\bos\lambda}(g^{-1}) g \circ x^{e,e} =
    \sum_{u \in \mfg{\bos \lambda}^-} u T_{\mfg{\bos \lambda}}^{\theta} u^{-1} \circ x^{e,e}.
    \end{equation}

    Now consider the right-hand side of (\ref{eq:immprod3}).
    By
    (\ref{eq:lmw1}) -- (\ref{eq:lmw2}),
    we may rewrite this as a sum of products of permanents and determinants,
  \begin{equation}\label{eq:lmwexpandQ}
    \sum_{\mathbf{J}}
    \bigg( \prod_{i=0}^{r_0} \imm{\beta_0}(Q_0(x)_{J^0_i,J^0_i}) \bigg)
    \cdots
    \bigg( \prod_{i=0}^{r_{d-1}} \imm{\beta_{d-1}}(Q_{d-1}(x)_{J^{d-1}_i,J^{d-1}_i}) \bigg),
  \end{equation}
  where the sum is over all ordered set partitions
    $\mathbf J =
    (J_1^0,\dotsc, J_{r_0}^0,
    \dotsc,
    J_1^{d-1}, \dotsc,J_{r_{d-1}}^{d-1})$
  of $[n]$ of type
  $\bos\lambda$, and where $\imm{\epsilon} = \det$, $\imm{1} = \perm$.
  For all $i,k$, the variables that appear in $Q_k(x)_{J^k_i,J^k_i}$
  are $x_{J^k_i,C_dJ^k_i}$.
  By (\ref{eq:PandDimm2}), we may again rewrite (\ref{eq:lmwexpandQ})
  as a sum 
  \begin{equation}\label{eq:cimmprodQ}
\sum_{\mathbf J}
    \bigg(
    \prod_{i=1}^{r_0}
    \grimm{\delta_0\beta_0}{\lambda^0_i}(x_{J^0_i,C_dJ^0_i})
    \bigg)
    \cdots
    \bigg(
    \prod_{i=1}^{r_{d-1}}
    \grimm{\delta_{d-1}\beta_{d-1}}{\lambda^{d-1}_i}(x_{J^{d-1}_i,C_dJ^{d-1}_i})
    \bigg)    
  \end{equation}
  in which each factor of each term has the form
  \begin{equation*}
    \grimm{\delta_k\beta_k}{\lambda^k_i}(x_{J^k_i,C_dJ^k_i}) =
    \begin{cases}
      {\displaystyle \sum_{g = (\gamma,w) \in \mfg{J^k_i}}} \nTksp
      (\gamma_1 \cdots \gamma_n)^{d-k} (x_{J^k_i,C_dJ^k_i})^{e,g}
      &\text{if $\beta_k = 1$},\\
      {\displaystyle \sum_{g = (\gamma,w) \in \mfg{J^k_i}}} \nTksp
      (\gamma_1 \cdots \gamma_n)^{d-k} (-1)^{\ell(w)}(x_{J^k_i,C_dJ^k_i})^{e,g}
      &\text{if $\beta_k = \epsilon$}.
      \end{cases}
  \end{equation*}      
  Define the set partition $\mathbf K = (K_1^0,\dotsc, K_{r_0}^0,
  \dotsc, K_1^{d-1}, \dotsc,K_{r_{d-1}}^{d-1})$ of type $\bos\lambda$
  as in (\ref{eq:Kdef}),
  and for each ordered set partition $\mathbf J$
  of type $\bos\lambda$
  define $u = u(\mathbf J) \in \mfg{\bos \lambda}^-$ to be the element whose
  one-line notation has the $\lambda^k_i$
  consecutive letters $K^k_i$
  in positions $J^k_i$,
  for $k = 0, \dotsc, d-1$ and $i = 1,\dotsc,r_k$.
  In particular,
  $u^{-1}$ is the element in $\sn \subset \gnd$
  whose one-line notation contains the increasing rearrangement of $J^k_i$
  in the consecutive positions $K^k_i$
  for $k = 0, \dotsc, d-1$ and $i = 1,\dotsc,r_k$.
  By (\ref{eq:mincosetreps}),
  the map $\mathbf J \mapsto u(\mathbf J)$
  defines a bijective correspondence between ordered set partitions
  of type $\bos \lambda$ and $\mfg{\bos \lambda}^-$.
  Thus in the expansion of the product (\ref{eq:cimmprodQ}),
  the monomials which appear are precisely the set
  $\{ x^{u^{-1}\ntnsp,yu^{-1}} \,|\, y \in \mfg{\bos \lambda} \}$.
  Factoring $y = y_0 \cdots y_{d-1}$ with
  $y_k \in \mfg{\bos \lambda, k}$,
  we may express the coefficient of each such monomial as
  \begin{equation}\label{eq:thetaeval}
\delta_0\beta_0(y_0^{-1}) \cdots \delta_{d-1}\beta_{d-1}(y_{d-1}^{-1}) =
    \theta(y^{-1}).
    \end{equation}
  Using these facts and (\ref{eq:ultor}), (\ref{eq:actonimm}),
we may rewrite (\ref{eq:lmwexpandQ}) as
\begin{equation*}
  \sum_{u \in \mfg{\bos \lambda}^-} \sum_{y \in \mfg{\bos \lambda}} \ntksp
  \theta(y^{-1}) x^{u^{-1},yu^{-1}}
  = \sum_{u \in \mfg{\bos \lambda}^-} \sum_{y \in \mfg{\bos \lambda}} \ntksp
  \theta(y^{-1}) uyu^{-1} \circ x^{e,e}
  = \sum_{u \in \mfg{\bos \lambda}^-} \ntksp uT_{\mfg{\bos \lambda}}^\theta 
  u^{-1} \circ x^{e,e}
  \end{equation*}
to see that it is equal to (\ref{eq:lhsstimmconj}).
\end{proof}

We illustrate with an example.
Consider the group $\mfg{6,3} = \mathbb Z/3\mathbb Z \wr \mfs 6$.
It trace space $\trsp(\mfg{6,3})$ has dimension equal to the
number of $3$-partitions of $6$, and its immanant space
\begin{equation*}
\spn_{\mathbb C} \{ x_{1,g_1} \cdots x_{6,g_6} \,|\, (g_1,\dotsc,g_6) \in \mfg{6,3} \}
\end{equation*}
requires the $6^2 \cdot 3 = 108$ variables
$\{ x_{i,\zeta^k j} \,|\, i,j = 1,\dotsc,6; k = 0,\dotsc,2 \}$,
where $\zeta = e^{2\pi i/3}$.
To economize notation, we define
$\dt m \defeq \zeta m$, $\ddt m \defeq \zeta^2 m$,
as in (\ref{eq:matrices}).
The $2^3 = 8$ monomial character bases
correspond to the triples 
of one-dimensional symmetric group characters
$(1,1,1), (1,1,\epsilon), (1, \epsilon, 1), \dotsc,
(\epsilon,\epsilon,\epsilon)$,
so that the basis corresponding to $(\epsilon,\epsilon,1)$ is
\begin{equation}\label{eq:basisex}
  \big \{ (\epsilon,\epsilon,1)^{\bos\lambda} =
  (\epsilon \otimes \delta_1\epsilon \otimes \delta_2)
  \upparrow_{\,\mfg{\bos\lambda}}^{\,\mfg{6,3}} \, \big | \, \bos\lambda \vdash 6 \big \}.
\end{equation}
Consider
the basis element $(\epsilon,\epsilon,1)^{(21, 1, 2)}$.
To evaluate $(\epsilon,\epsilon,1)^{(21, 1, 2)}(g)$ for all $g \in \gnd$,
we write its immanant
$\imm{(\epsilon,\epsilon,1)^{(21,1,2)}}^{\mfg{6,3}}(x)$
as a sum of $60$ terms
\begin{equation}\label{eq:21,1,2}
  \begin{gathered}
    \imm{\epsilon^{21}}(Q_0(x)_{123,123})\imm{\epsilon^1}(Q_1(x)_{4,4})
    \imm{\eta^2}(Q_2(x)_{56,56}) \\
    + \imm{\epsilon^{21}}(Q_0(x)_{123,123})\imm{\epsilon^1}(Q_1(x)_{5,5})
    \imm{\eta^2}(Q_2(x)_{46,46}) \\
    + \imm{\epsilon^{21}}(Q_0(x)_{123,123})\imm{\epsilon^1}(Q_1(x)_{6,6})
    \imm{\eta^2}(Q_2(x)_{45,45}) \\
    + \imm{\epsilon^{21}}(Q_0(x)_{124,124})\imm{\epsilon^1}(Q_1(x)_{3,3})
    \imm{\eta^2}(Q_2(x)_{56,56}) \\
    \vdots \\
    + \imm{\epsilon^{21}}(Q_0(x)_{456,456})\imm{\epsilon^1}(Q_1(x)_{3,3})
  \imm{\eta^2}(Q_2(x)_{12,12}),
  \end{gathered}
\end{equation}
each corresponding to an ordered set partition of $[6]$
of type $(3,1,2)$.
Consider the term corresponding to the ordered set partition $(136, 4, 25)$.
It is a product of the three factors

\begin{equation}\label{eq:136|4|25}
  \begin{aligned}
    \imm{\epsilon^{21}}(Q_0(x)_{136,136}) &=
  \det \ntnsp \begin{bmatrix}
    x_{1,1} + x_{1, \dt 1} + x_{1, \ddt 1} & x_{1,3} + x_{1, \dt 3} + x_{1, \ddt 3} \\
    x_{3,1} + x_{3, \dt 1} + x_{3, \ddt 1} & x_{3,3} + x_{3, \dt 3} + x_{3, \ddt 3}
  \end{bmatrix}
  \ntnsp (x_{6,6} + x_{6, \dt 6} + x_{6, \ddt 6})\\
  &~+
  \det \ntnsp \begin{bmatrix}
    x_{1,1} + x_{1, \dt 1} + x_{1, \ddt 1} & x_{1,6} + x_{1, \dt 6} + x_{1, \ddt 6} \\
    x_{6,1} + x_{6, \dt 1} + x_{6, \ddt 1} & x_{6,6} + x_{6, \dt 6} + x_{6, \ddt 6}
  \end{bmatrix}
  \ntnsp (x_{3,3} + x_{3, \dt 3} + x_{3, \ddt 3})\\
  &~+
  \det \ntnsp \begin{bmatrix}
    x_{3,3} + x_{3, \dt 3} + x_{3, \ddt 3} & x_{3,6} + x_{3, \dt 6} + x_{3, \ddt 6} \\
    x_{6,3} + x_{6, \dt 3} + x_{6, \ddt 3} & x_{6,6} + x_{6, \dt 6} + x_{6, \ddt 6}
  \end{bmatrix}
  \ntnsp (x_{1,1} + x_{1, \dt 1} + x_{1, \ddt 1}),\\
  \imm{\epsilon^{1}}(Q_1(x)_{4,4}) &=
  x_{4,4} + \zeta^2 x_{4, \dt 4} + \zeta x_{4, \ddt 4},\\
  \imm{\eta^{2}}(Q_2(x)_{25,25}) &=
  \perm \ntnsp \begin{bmatrix}
    x_{2,2} + \zeta x_{2,\dt 2} + \zeta^2 x_{2,\ddt 2} &
    x_{2,5} + \zeta x_{2,\dt 5} + \zeta^2 x_{2,\ddt 5} \\
    x_{5,2} + \zeta x_{5,\dt 2} + \zeta^2 x_{5,\ddt 2} &
    x_{5,5} + \zeta x_{5,\dt 5} + \zeta^2 x_{5,\ddt 5}
  \end{bmatrix}\ntnsp.
  \end{aligned}
\end{equation}
It is easy to see that this term, like all others in (\ref{eq:21,1,2}),
contributes
$3$ to the coefficient of $x_{1,1}x_{2,2}x_{3,3}x_{4,4}x_{5,5}x_{6,6}$.
Thus we have
$(\epsilon,\epsilon,1)^{(21,1,2)}(123456) = 180$.  Now consider the computation of
$(\epsilon,\epsilon,1)^{(21,1,2)}(623\dt45\dt1)$.
The term (\ref{eq:136|4|25}) contributes $-\zeta^2$ to the
coefficient of $x_{1,6}x_{2,2}x_{3,3}x_{\smash{4,\dt4}}x_{5,5}x_{\smash{6,\dt1}}$,
as do the terms
in (\ref{eq:21,1,2}) corresponding to the other
two ordered set partitions $(1a6,4,bc)$. 
The term corresponding to
the ordered set partition
$(235,4,16)$
contributes $3 \zeta^2\zeta = 3$, and the three terms corresponding to
the ordered set partitions
$(ab4,c,16)$ contribute $3 \zeta$.
Terms corresponding to all other ordered set partitions contribute $0$.
Thus we have
\begin{equation*}
  (\epsilon,\epsilon,1)^{(21,1,2)}(623\dt45\dt1) = 3(1 + \zeta - \zeta^2).
\end{equation*}

It would be interesting to extend Theorem~\ref{t:main} to
obtain a generating function for the monomial characters of Hecke algebras
of wreath products~\cite{AK}, as was done for monomial characters
of the Hecke algebra of $\sn$ in \cite[Thm.\,2.1]{KSkanQGJ}.


\begin{thebibliography}{10}

\bibitem{AK}
{\sc S.~Ariki and K.~Koike}.
\newblock A {H}ecke algebra of {$({\bf Z}/r{\bf Z})\wr{\mathfrak S}_n$} and
  construction of its irreducible representations.
\newblock {\em Adv. Math.\/}, {\bf 106}, 2 (1994) pp. 216--243.

\bibitem{BBCoxeter}
{\sc A.~Bj{\"o}rner and F.~Brenti}.
\newblock {\em Combinatorics of {C}oxeter groups\/}, vol. 231 of {\em Graduate
  Texts in Mathmatics\/}.
\newblock Springer, New York (2005).

\bibitem{CHSSkanEKL}
{\sc S.~Clearman, M.~Hyatt, B.~Shelton, and M.~Skandera}.
\newblock Evaluations of {H}ecke algebra traces at {K}azhdan-{L}usztig basis
  elements.
\newblock {\em Electron. J. Combin.\/}, {\bf 23}, 2 (2016).
\newblock Paper 2.7, 56 pages.

\bibitem{CSkanTNNChar}
{\sc A.~Clearwater and M.~Skandera}.
\newblock Total nonnegativity and {H}ecke algebra trace evaluations (2019).
\newblock Submitted.

\bibitem{KLSBasesQMBIndSgn}
{\sc R.~Kaliszewski, J.~Lambright, and M.~Skandera}.
\newblock Bases of the quantum matrix bialgebra and induced sign characters of
  the {H}ecke algebra.
\newblock {\em J. Algebraic Combin.\/}, {\bf 49}, 4 (2019) pp. 475--505.

\bibitem{KSkanQGJ}
{\sc M.~Konvalinka and M.~Skandera}.
\newblock Generating functions for {H}ecke algebra characters.
\newblock {\em Canad.\ J.\ Math.\/}, {\bf 63}, 2 (2011) pp. 413--435.

\bibitem{LittlewoodTGC}
{\sc D.~E. Littlewood}.
\newblock {\em The {T}heory of {G}roup {C}haracters and {M}atrix
  {R}epresentations of {G}roups\/}.
\newblock Oxford University Press, New York (1940).

\bibitem{MerWatIneq}
{\sc R.~Merris and W.~Watkins}.
\newblock Inequalities and identities for generalized matrix functions.
\newblock {\em Linear Algebra Appl.\/}, {\bf 64} (1985) pp. 223--242.

\bibitem{Sag}
{\sc B.~Sagan}.
\newblock {\em The Symmetric Group\/}.
\newblock Springer, New York (2001).

\bibitem{SkanCharChrom}
{\sc M.~Skandera}.
\newblock Characters and chromatic symmetric functions (2020).
\newblock Submitted.

\bibitem{SkanGCEHGC}
{\sc M.~Skandera}.
\newblock Graph coloring and hyperoctahedral group character evaluations
  (2020).
\newblock In preparation.

\bibitem{SnaithEBI}
{\sc V.~P. Snaith}.
\newblock {\em Explicit {B}rauer Induction\/}, vol.~40 of {\em Cambridge
  Studies in Advanced Mathematics\/}.
\newblock Cambridge University Press, Cambridge (1994).
\newblock With applications to algebra and number theory.

\bibitem{StanEC1}
{\sc R.~Stanley}.
\newblock {\em {Enumerative Combinatorics}\/}, vol.~1.
\newblock Cambridge University Press, Cambridge (1997).

\bibitem{StanPos}
{\sc R.~Stanley}.
\newblock Positivity problems and conjectures.
\newblock In {\em Mathematics: Frontiers and Perspectives\/} ({\sc V.~Arnold,
  M.~Atiyah, P.~Lax, and B.~Mazur}, eds.). American Mathematical Society,
  Providence, RI (2000), pp. 295--319.

\bibitem{StemImm}
{\sc J.~Stembridge}.
\newblock Immanants of totally positive matrices are nonnegative.
\newblock {\em Bull. London Math. Soc.\/}, {\bf 23} (1991) pp. 422--428.

\end{thebibliography}
\end{document}